\newcommand{\sstirling}[2]{\genfrac\{\}{0pt}{}{#1}{#2}}
\newcommand{\rbstirling}[2]{\genfrac\langle\rangle{0pt}{}{#1}{#2}}
\def\a{{\bf a}}
\def\s{\mathfrak{S}}
\def\b{\mathfrak{B}}
\newtheorem{theorem}{Theorem}%[section]
\newtheorem{corollary}[theorem]{Corollary}
\begin{document}

\begin{center}
\vskip 1cm{\LARGE\bf Some Generalizations of Spivey's Bell Number Formula}
\vskip 1cm
\large 
{\bf Mahid M. Mangontarum}\\
Department of Mathematics\\
Mindanao State University -- Main Campus\\
Marawi City 9700\\
Philippines\\
\href{mailto:mmangontarum@yahoo.com}{\tt mmangontarum@yahoo.com}\\
\href{mailto:mangontarum.mahid@msumain.edu.ph}{\tt mangontarum.mahid@msumain.edu.ph}\\
\vskip 1cm
{\bf Amerah M. Dibagulun}\\
Department of Mathematics\\
Mindanao State University -- Main Campus\\
Marawi City 9700\\
Philippines\\
\href{mailto:a.dibagulun@yahoo.com}{\tt a.dibagulun@yahoo.com}\\
\end{center}

\begin{abstract}
In this paper, a generalized recurrence relation for the $r$-Whitney numbers of the second kind is derived using as framework the operators $X$ and $D$ satisfying the commutation relation $DX-XD=1$. This recurrence relation is shown to be a generalization of the well-known Spivey's Bell number. Moreover, several other identities generalizing Spivey's Bell number formula are obtained.

\medskip

\noindent\textbf{Keywords:} Stirling numbers, Bell numbers, $r$-Whitney numbers, $r$-Dowling polynomials, Spivey's Bell number formula

\medskip

\noindent\textbf{2010 MSC:} 11B83, 11B73 

\end{abstract}

\section{Introduction}

The \emph{Stirling numbers of the second kind}, denoted by $\sstirling{n}{j}$, are known to count the number of partitions of a set with $n$ elements into $k$ non-empty subsets. These numbers are also known to be coefficients in the expansion of
\begin{equation}
t^n=\sum_{k=0}^n\sstirling{n}{k}(t)_k,
\end{equation}
where $(t)_k=t(t-1)(t-2)\cdots(t-k+1)$ (see \cite{Comt}). The sum of the Stirling numbers of the second kind, called \emph{Bell numbers} and denoted by $B_n$, count the total number of partitions of a set with $n$ elements. They are given by
\begin{equation}
B_n=\sum_{j=0}^{n}\sstirling{n}{j}\label{Belldef}
\end{equation}
and are known to satisfy the recurrence relation
\begin{equation}
B_{n+1}=\sum_{k=0}^n\binom{n}{k}B_k.\label{Bellrec}
\end{equation}
Using the combinatorial meanings of the numbers $\sstirling{n}{k}$ and $B_n$, Spivey \cite{Spivey} was able to obtain a generalized recurrence formula for $B_n$ which unifies Equations \eqref{Belldef} and \eqref{Bellrec}. The said formula, popularly known as ``Spivey's Bell number formula'', is given by
\begin{equation}
B_{n+m}=\sum_{k=0}^n\sum_{j=0}^mj^{n-k}\sstirling{m}{j}\binom{n}{k}B_k.\label{spivey}
\end{equation}
After its formulation, several mathematicians became interested in studying alternative proofs and some extensions of Spivey's Bell number formula. In particular, Gould and Quaintance \cite{Gould} proved \eqref{spivey} using generating functions while Belbachir and Mihoubi \cite{Belbachir} made use of a decomposition of the \emph{Bell polynomials} into a certain basis. Mez\H{o} \cite{Mez} obtained a generalization of Spivey's Bell number formula in terms of the \emph{$r$-Bell numbers} defined by
\begin{equation}
B_{n,r}=\sum_{k=0}^n\sstirling{n}{k}_r,
\end{equation} 
where $\sstirling{n}{k}_r$ denote the \emph{$r$-Stirling numbers of the second kind} (see \cite{Broder}). The said generalization is given by
\begin{equation}
B_{n+m,r}=\sum_{k=0}^n\sum_{j=0}^m(j+r)^{n-k}\sstirling{m}{j}_r\binom{n}{k}B_k
\end{equation}
and was proved using the combinatorial interpretation of the $r$-Stirling numbers of the second kind and the $r$-Bell numbers. On the other hand, Katriel \cite{Katriel} obtained the following \emph{$q$-analogue} of Spivey's Bell number formula for the case of the \emph{$q$-Bell numbers} $B_n(q)$:
\begin{equation}
B_{n+m}(q)=\sum_{k=0}^n\sum_{j=0}^m\sstirling{m}{j}_q\binom{n}{k}[j]_q^{n-k}q^{jk}B_k(q).\label{katrielresult}
\end{equation}
Here, $[j]_q=\frac{q^j-1}{q-1}$ is the \emph{$q$-integer} and $\sstirling{m}{j}_q$ denote the \emph{$q$-Stirling numbers of the second kind} defined by
\begin{equation}
(XD)^n=\sum_{=0}^n\sstirling{n}{k}_qX^kD^k,
\end{equation}
where $X$ and $D$ are the operators satisfying the commutation relation
\begin{equation}
[D,X]_q:=DX-qXD=1\label{qcommrel}
\end{equation}
(see \cite[Section\ 2]{Katriel}). Other related studies are due to Xu \cite{Xu}, Mansour et al. \cite{Mansour2} and Corcino et al. \cite{CorcinoCelesteGonzales}. Their results are briefly discussed in Section \ref{sec4}.

Following a method analogous to the work of Katriel \cite{Katriel}, the authors obtained an identity that includes \eqref{spivey} as a special case. This involves the \emph{$r$-Whitney numbers of the second kind}, a natural generalization of the Stirling numbers of the second kind, which is discussed in Section \ref{sec2}.

\section{The $r$-Whitney Numbers of the Second Kind}\label{sec2}

Mez\H{o} \cite{Mez1} defined the $r$-Whitney numbers of the second kind, denoted by $W_{m,r}(n,k)$, as coefficients in the expansion of
\begin{equation}
(mt+r)^n=\sum_{k=0}^n m^kW_{m,r}(n,k)(t)_k,\label{rWhitney}
\end{equation}
for any real numbers $m$ and $r$. Notice that when $m=1$ and $r=0$, the numbers $W_{m,r}(n,k)$ reduce back to the Stirling numbers of the second kind; i.e.,
\begin{equation}
W_{1,0}(n,k)=\sstirling{n}{k}.\label{d2}
\end{equation}
Furthermore, other generalizations and extensions of the Stirling numbers of the second kind can be obtained from \eqref{rWhitney} by assigning suitable values for the parameters $m$ and $r$. To be precise, the $r$-Stirling numbers of the second kind $\sstirling{n}{k}_r$ by Broder \cite{Broder} with horizontal generating function given by
\begin{equation}
(t+r)^n=\sum_{k=0}^n\sstirling{n+r}{k+r}_r(t)_k
\end{equation}
is the case when $m=1$ in \eqref{rWhitney}; i.e.,
\begin{equation}
W_{1,r}(n,k)=\sstirling{n+r}{k+r}_r.\label{x1}
\end{equation}
The \emph{Whitney numbers of the second kind $W_m(n,k)$ of Dowling lattices} by Benoumhani \cite{Benoumhani} defined by
\begin{equation}
(mt+1)^n=\sum_{k=0}^nm^kW_m(n,k)(t)_k
\end{equation}
is the case when $r=1$ in \eqref{rWhitney}; i.e.,
\begin{equation}
W_{m,1}(n,k)=W_m(n,k).\label{x2}
\end{equation}	
The \emph{non-central Stirling numbers of the second kind $S_{\alpha}(n,k)$} by Koutras \cite{Koutras} defined by
\begin{equation}
(t-\alpha)^n=\sum_{k=0}^nS_\alpha(n,k)(t)_k
\end{equation}
is the case when $m=1$ and $r=-\alpha$ in \eqref{rWhitney}; i.e.,
\begin{equation}
W_{1,-\alpha}(n,k)=S_{\alpha}(n,k).\label{x3}
\end{equation}
The \emph{translated Whitney numbers of the second kind $\widetilde{W}_{(\alpha)}(n,k)$} by Belbachir and Bousbaa \cite{Bel} (thoroughly discussed in \cite{Mangontarum3,Mah1}) with horizontal generating function given by
\begin{equation}
t^n=\sum_{k=0}^n\widetilde{W}_{(\alpha)}(n,k)(t|\alpha)_k,
\end{equation}
where $(t|\alpha)_k=\prod_{i=0}^{k-1}(t-i\alpha)$, is the case when $r=0$ and $m=\alpha$ in \eqref{rWhitney}; i.e.,
\begin{equation}
W_{\alpha,0}(n,k)=\widetilde{W}_{(\alpha)}(n,k),\label{x4}
\end{equation}
Finally, the \emph{$(r,\beta)$-Stirling Numbers $\rbstirling{n}{k}_{r,\beta}$} \cite{Corcino}, the \emph{Ruci\'{n}ski-Voigt numbers $S^{n}_{k}(\a)$} \cite{Rucin} and the \emph{non-central Whitney numbers of the second kind $\widetilde{W}_{m,a}(n,k)$} \cite{Mah3} defined by
\begin{equation}
t^n=\sum_{k=0}^n\binom{\frac{t-r}{\beta}}{k}\beta^kk!\rbstirling{n}{k}_{r,\beta},
\end{equation}
\begin{equation}
t^n=\sum_{k=0}^nS^{n}_{k}(\a)P^{\a}_{k}(x),
\end{equation}
where $\a=(a,a+r,a+2r,a+3r,\ldots)$ and $P^{\a}_{k}(x)=\prod_{i=0}^{k-1}(t-a+ir)$, and 
\begin{equation}
(mt-a)^n=\sum_{k=0}^nm^k\widetilde{W}_{m,a}(n,k)=W_{m,-a}(n,k)(t)_k,
\end{equation}
respectively, can also be shown to be equivalent to the $r$-Whitney numbers of the second kind by carefully comparing their defining relations with \eqref{rWhitney}. By doing so, it can be easily seen that
\begin{equation}
\rbstirling{n}{k}_{r,\beta}=W_{\beta,r}(n,k),
\end{equation}
\begin{equation}
S^{n}_{k}(\a)=W_{r,a}(n,k),
\end{equation}
and
\begin{equation}
\widetilde{W}_{m,a}(n,k)=W_{m,-a}(n,k).
\end{equation}

Now, consider the classical operators $X$ and $D$ defined by
\begin{equation}
Xf(x)=xf(x) \label{d4}
\end{equation}
and 
\begin{equation}
Df(x)=\frac{d}{dx}f(x), 
\end{equation}
respectively, which are known to satisfy the commutation relation
\begin{equation}
[D,X]:=DX-XD=1.\label{commrel}
\end{equation}
Since $Dx^n=nx^{n-1}$ and $Df(x)=0$ when $f(x)=c$ ($c$ is a constant), then by \eqref{commrel},
\begin{equation}
[D,X^k]:=DX^k-X^kD=kX^{k-1}.
\end{equation}
Consequently, we get
\begin{equation}
DX^k=X^kD+kX^{k-1}.\label{d1}
\end{equation}
We are now ready to state the following theorem which expresses the $r$-Whitney numbers of the second kind in terms of $X$ and $D$:
\begin{theorem}
The $r$-Whitney numbers of the second kind satisfy the following relation:
\begin{equation}
(mXD+r)^n=\sum_{k=0}^nm^kW_{m,r}(n,k)X^kD^k.\label{altrWhitney}
\end{equation}
\end{theorem}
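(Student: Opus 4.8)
The plan is to proceed by induction on $n$, using the commutation relation \eqref{commrel}---or more precisely its iterate \eqref{d1}---to push powers of $D$ past powers of $X$ at each step. For the base case $n=0$, the left-hand side of \eqref{altrWhitney} is the identity operator, while the right-hand side is $m^0 W_{m,r}(0,0)X^0D^0=W_{m,r}(0,0)$, so it suffices to note that $W_{m,r}(0,0)=1$; this follows at once from the defining relation \eqref{rWhitney} by setting $n=0$.

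For the inductive step I would assume that \eqref{altrWhitney} holds for a fixed $n$ and write
\begin{equation}
(mXD+r)^{n+1}=(mXD+r)\sum_{k=0}^n m^k W_{m,r}(n,k)\,X^kD^k.
\end{equation}
The heart of the argument is to simplify $(mXD+r)X^kD^k$. Applying \eqref{d1} in the form $DX^k=X^kD+kX^{k-1}$ gives $XDX^k=X^{k+1}D+kX^k$, and hence
\begin{equation}
(mXD+r)X^kD^k=mX^{k+1}D^{k+1}+(mk+r)X^kD^k.
\end{equation}
Substituting this into the sum splits the result into a term carrying $X^{k+1}D^{k+1}$ and a term carrying $X^kD^k$. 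Reindexing the former by replacing $k$ with $k-1$, I would then collect the coefficient of $m^kX^kD^k$ and obtain $W_{m,r}(n,k-1)+(mk+r)W_{m,r}(n,k)$.

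To finish, this coefficient must be recognized as $W_{m,r}(n+1,k)$. The remaining ingredient is therefore the three-term recurrence $W_{m,r}(n+1,k)=W_{m,r}(n,k-1)+(mk+r)W_{m,r}(n,k)$, which I would establish directly from \eqref{rWhitney}: multiplying $(mt+r)^n$ by $(mt+r)$ and using the splitting $mt+r=m(t-k)+(mk+r)$ together with $(t-k)(t)_k=(t)_{k+1}$ produces exactly this recurrence after comparing coefficients of $(t)_k$. I expect the only genuine subtlety to lie in the boundary bookkeeping at $k=0$ and $k=n+1$ after reindexing, where one must invoke the vanishing conventions $W_{m,r}(n,-1)=0$ and $W_{m,r}(n,n+1)=0$ to make the collected coefficients agree with the recurrence; the algebraic core---the operator identity for $(mXD+r)X^kD^k$---is straightforward once \eqref{d1} is in hand.
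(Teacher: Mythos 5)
Your proposal is correct and follows essentially the same route as the paper: induction on $n$, using the iterated commutation relation \eqref{d1} to handle $(mXD+r)X^kD^k$ and the recurrence $W_{m,r}(n+1,k)=W_{m,r}(n,k-1)+(mk+r)W_{m,r}(n,k)$ to identify the resulting coefficients (the paper runs the same computation in the reverse direction, starting from the sum for $n+1$ and reconstructing $(mXD+r)^{n+1}$). The only difference is that the paper simply cites this recurrence from Mez\H{o} \cite{Mez1}, whereas you derive it from the defining relation \eqref{rWhitney}; your derivation via $mt+r=m(t-k)+(mk+r)$ is valid and makes the argument self-contained.
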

\begin{proof}
We proceed by induction on $n$. Clearly, \eqref{altrWhitney} holds when $n=0$. Now, assume that \eqref{altrWhitney} holds for $n>0$. Using the recurrence relation (cf. \cite{Mez1}) given by
\begin{equation}
W_{m,r}(n,k)=W_{m,r}(n-1,k-1)+(mk+r)W_{m,r}(n-1,k)\label{recrWhitney}
\end{equation}
gives
\begin{eqnarray*}
\sum_{k=0}^{n+1}m^kW_{m,r}(n+1,k)X^kD^k&=&\sum_{k=0}^{n+1}m^k\left\{W_{m,r}(n,k-1)+(mk+r)W_{m,r}(n,k)\right\}X^kD^k\\
&=&\sum_{k=0}^{n+1}m^kW_{m,r}(n,k-1)X^kD^k+\sum_{k=0}^{n+1}m^{k+1}kW_{m,r}(n,k)X^kD^k\\
& &\ +\sum_{k=0}^{n+1}rm^kW_{m,r}(n,k)X^kD^k\\
&=&\sum_{k=0}^nm^{k+1}W_{m,r}(n,k)X(X^kD+kX^{k-1})D^k\\
& &\ +\sum_{k=0}^nrm^kW_{m,r}(n,k)X^kD^k.
\end{eqnarray*}
Using \eqref{d1}, and by the inductive hypothesis,
\begin{eqnarray*}
\sum_{k=0}^{n+1}m^kW_{m,r}(n+1,k)X^kD^k&=&\sum_{k=0}^nm^{k+1}W_{m,r}(n,k)X(DX^k)D^k+\sum_{k=0}^nrm^kW_{m,r}(n,k)X^kD^k\\
&=&(mXD+r)\sum_{k=0}^nm^kW_{m,r}(n,k)X^kD^k\\
&=&(mXD+r)(mXD+r)^n\\
&=&(mXD+r)^{n+1}.
\end{eqnarray*}
This means that \eqref{altrWhitney} holds for $n+1>0$. This completes the proof.
\end{proof}

\section{A Generalization of Spivey's Bell Number Formula}\label{sec3}

Before proceeding, recall that Cheon and Jung \cite{Cheon} defined the \emph{$r$-Dowling polynomials}, denoted by $D_{m,r}(n;x)$, by
\begin{equation}
D_{m,r}(n;x)=\sum_{k=0}^nW_{m,r}(n,k)x^k,\label{rdowlingdef}
\end{equation}
where $D_{m,r}(n):=D_{m,r}(n;1)$ denote the \emph{$r$-Dowling numbers}.

The following theorem is the main result of this paper:
\begin{theorem}\label{maintheorem}
For non-negative integers $n$ and $\ell$, and real numbers $m$ and $r$, the $r$-Dowling polynomials satisfy the following recurrence relation:
\begin{equation}
D_{m,r}(n+\ell;x)=\sum_{j=0}^{\ell}\sum_{k=0}^nW_{m,r}(\ell,j)\binom{n}{k}(mj)^{n-k}D_{m,r}(k;x)x^j.\label{mainresult}
\end{equation}
Consequently, the $r$-Dowling numbers satisfy the following recurrence relation:
\begin{equation}
D_{m,r}(n+\ell)=\sum_{j=0}^{\ell}\sum_{k=0}^nW_{m,r}(\ell,j)\binom{n}{k}(mj)^{n-k}D_{m,r}(k).\label{mainresult2}
\end{equation}
\end{theorem}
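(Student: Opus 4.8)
The plan is to work entirely at the level of the operators $X$ and $D$, exploiting the factorization of $(mXD+r)^{n+\ell}$ together with the operator representation \eqref{altrWhitney}. Write $A:=mXD+r$ for brevity, and assume $m\neq 0$ (the degenerate case is handled at the end). First I would record how $A$ acts on the exponential $e^{x/m}$. Since $D^{k}e^{x/m}=m^{-k}e^{x/m}$ and $X^{k}$ is multiplication by $x^{k}$, we have $X^{k}D^{k}e^{x/m}=(x/m)^{k}e^{x/m}$, so applying \eqref{altrWhitney} gives
\begin{equation*}
A^{n}e^{x/m}=\sum_{k=0}^{n}m^{k}W_{m,r}(n,k)(x/m)^{k}e^{x/m}=\sum_{k=0}^{n}W_{m,r}(n,k)x^{k}e^{x/m}=D_{m,r}(n;x)\,e^{x/m}.
\end{equation*}
Thus $e^{x/m}$ is precisely the function that extracts the $r$-Dowling polynomials from powers of $A$, and dividing by the nonvanishing factor $e^{x/m}$ will let me read off \eqref{mainresult}.

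The crux of the argument is a commutation rule that lets $D$ pass through $A$. Using $DX=XD+1$ one computes $DA=D(mXD+r)=mXD^{2}+(m+r)D=(A+m)D$, so $D$ shifts $A$ by the scalar $m$. Iterating gives $D^{j}A=(A+jm)D^{j}$ and therefore the key identity
\begin{equation*}
D^{j}A^{n}=(A+jm)^{n}D^{j},
\end{equation*}
valid for all non-negative integers $j$ and $n$. I expect this to be the main obstacle, in the sense that everything else is bookkeeping once the clean shift relation is in place; it plays the role that the $q$-commutation \eqref{qcommrel} played in Katriel's computation.

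With the shift relation available, I would factor $A^{n+\ell}=A^{\ell}A^{n}$ and expand the left factor by \eqref{altrWhitney}:
\begin{equation*}
A^{n+\ell}=\sum_{j=0}^{\ell}m^{j}W_{m,r}(\ell,j)\,X^{j}D^{j}A^{n}=\sum_{j=0}^{\ell}m^{j}W_{m,r}(\ell,j)\,X^{j}(A+jm)^{n}D^{j}.
\end{equation*}
Since $jm$ is a scalar commuting with $A$, the binomial theorem gives $(A+jm)^{n}=\sum_{k=0}^{n}\binom{n}{k}(mj)^{n-k}A^{k}$. Applying the whole operator to $e^{x/m}$ and using $D^{j}e^{x/m}=m^{-j}e^{x/m}$, the preliminary formula $A^{k}e^{x/m}=D_{m,r}(k;x)e^{x/m}$, and the fact that $X^{j}$ multiplies by $x^{j}$, the factors $m^{j}$ and $m^{-j}$ cancel and I obtain
\begin{equation*}
D_{m,r}(n+\ell;x)\,e^{x/m}=\Bigg(\sum_{j=0}^{\ell}\sum_{k=0}^{n}W_{m,r}(\ell,j)\binom{n}{k}(mj)^{n-k}x^{j}D_{m,r}(k;x)\Bigg)e^{x/m}.
\end{equation*}
Cancelling $e^{x/m}$ yields \eqref{mainresult}. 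Finally, specializing $x=1$ gives \eqref{mainresult2} at once, and the excluded case $m=0$ follows because both sides of \eqref{mainresult} are polynomials in $m$, so the identity, valid for all $m\neq 0$, persists by continuity.
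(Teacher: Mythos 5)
Your proof is correct, and it follows the same overall strategy as the paper: expand powers of $A=mXD+r$ via \eqref{altrWhitney}, factor $A^{n+\ell}$, commute, apply the binomial theorem, and evaluate against an exponential test function. Two details differ. First, your key lemma is the mirror image of the paper's: you push $D^j$ rightward through $A^n$ using $D^jA^n=(A+jm)^nD^j$ (starting from $A^{n+\ell}=A^\ell A^n$), whereas the paper pushes $A^n$ rightward through $X^j$ using $A^nX^j=X^j(A+jm)^n$, which is its identity \eqref{r1} (starting from $A^{n+\ell}=A^nA^\ell$); both routes land on the identical intermediate operator $\sum_{j}m^jW_{m,r}(\ell,j)X^j(A+jm)^nD^j$, so this difference is essentially cosmetic. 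Second, and more substantively, you test against $e^{x/m}$ rather than $e^x$. This makes $A^ne^{x/m}=D_{m,r}(n;x)\,e^{x/m}$ come out directly in the variable $x$, avoiding the paper's detour through the specialization $D_{m,r}(n;mX)$ and its final substitution $mx\mapsto x$; the price is the hypothesis $m\neq0$, which you correctly discharge by observing that both sides of \eqref{mainresult} are polynomials in $m$ (this follows from the recurrence \eqref{recrWhitney}, which shows $W_{m,r}(n,k)$ is a polynomial in $m$ and $r$), so the identity extends from $m\neq 0$ to all $m$. Your treatment is in fact slightly more careful than the paper's on this point, since the paper's rescaling $mx\mapsto x$ also tacitly requires $m\neq 0$ and the degenerate case is never remarked upon there.
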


\begin{proof}
Applying both sides of \eqref{altrWhitney} to the exponential function $e^x$ while keeping in mind that $De^x=e^x$ yields
\begin{eqnarray*}
\frac{1}{e^x}(mXD+r)^ne^x&=&\frac{1}{e^x}\sum_{k=0}^nm^kW_{m,r}(n,k)X^kD^ke^x\\
&=&\frac{1}{e^x}\sum_{k=0}^nW_{m,r}(n,k)(mX)^ke^x.
\end{eqnarray*}
Hence, by \eqref{rdowlingdef},
\begin{equation}
\frac{1}{e^x}(mXD+r)^ne^x=\frac{1}{e^x}D_{m,r}(n;mX)e^x.\label{r2}
\end{equation}
It is important to note that the expression $D_{m,r}(n;mX)$ does not strictly refer to the $r$-Dowling polynomials but to a specialization when $x$ is replaced with $mX$. Now, \eqref{d1} can be further expressed as
\begin{equation}
(XD)X^k=X^k\big(k+XD\big).
\end{equation}
Multiplying both sides of this identity by $m$ and then adding $rX^k$, gives
\begin{equation}
\big(mXD+r\big)X^k=X^k\big(mk+r+mXD\big).\label{r1}
\end{equation}
Combining this with \eqref{altrWhitney} and then applying the binomial theorem yields
\begin{eqnarray*}
(mXD+r)^{n+\ell}&=&(mXD+r)^n\sum_{j=0}^{\ell}m^jW_{m,r}(\ell,j)X^jD^j\\
&=&\sum_{j=0}^{\ell}m^jW_{m,r}(\ell,j)X^j\big(mj+r+mXD\big)^nD^j\\
&=&\sum_{j=0}^{\ell}\sum_{k=0}^nm^jW_{m,r}(\ell,j)\binom{n}{k}X^j(mj)^{n-k}(mXD+r)^kD^j.
\end{eqnarray*}
By \eqref{r2}, the left-hand side becomes
\begin{equation}
\frac{1}{e^x}(mXD+r)^{n+\ell}e^x=\frac{1}{e^x}D_{m,r}(n+\ell;mX)e^x.\label{d3}
\end{equation}
The right-hand side, when applied to $e^x$, becomes\\
\noindent$\displaystyle\frac{1}{e^x}\sum_{j=0}^{\ell}\sum_{k=0}^nm^jW_{m,r}(\ell,j)\binom{n}{k}X^j(mj)^{n-k}(mXD+r)^kD^je^x$
\begin{eqnarray*}
&=&\frac{1}{e^x}\sum_{j=0}^{\ell}\sum_{k=0}^nm^jW_{m,r}(\ell,j)\binom{n}{k}X^j(mj)^{n-k}\sum_{i=0}^km^iW_{m,r}(k,i)X^iD^je^x\\
&=&\frac{1}{e^x}\sum_{j=0}^{\ell}\sum_{k=0}^nm^jW_{m,r}(\ell,j)\binom{n}{k}X^j(mj)^{n-k}D_{m,r}(k;mX)e^x.
\end{eqnarray*}
Since $\frac{1}{e^x}e^x=1$, then combining the last equality with \eqref{d3} and then using \eqref{d4} gives
\begin{equation}
D_{m,r}(n+\ell;mx)=\sum_{j=0}^{\ell}\sum_{k=0}^nW_{m,r}(\ell,j)\binom{n}{k}(mx)^j(mj)^{n-k}D_{m,r}(k;mx).
\end{equation}
The desired result in \eqref{mainresult} is obtained when $mx$ is replaced with $x$. Finally, \eqref{mainresult2} is the case when $x=1$ in \eqref{mainresult}.
\end{proof}

When $m=1$, $r=0$ and $x=1$ in \eqref{mainresult}, we get \eqref{spivey} as a particular case. That is,
\begin{equation}
D_{1,0}(n+\ell;1):=B_{n+\ell}=\sum_{j=0}^{\ell}\sum_{k=0}^n\sstirling{\ell}{j}\binom{n}{k}j^{n-k}B_k.
\end{equation}
Hence, \eqref{mainresult} is a generalization of Spivey's Bell number formula. Using \eqref{rdowlingdef} to both sides of \eqref{mainresult} yields
\begin{equation}
\sum_{i=0}^{n+\ell}W_{m,r}(n+\ell,i)x^i=\sum_{j=0}^{\ell}\sum_{k=0}^nW_{m,r}(\ell,j)\binom{n}{k}(mj)^{n-k}\sum_{i=j}^{k}W_{m,r}(k,i-j)x^i.
\end{equation}
Since $0\leq k\leq n+\ell$, then by comparing the coefficients of $x^i$, we see that
\begin{equation}
W_{m,r}(n+\ell,i)=\sum_{j=0}^{\ell}\sum_{k=0}^nW_{m,r}(\ell,j)\binom{n}{k}(mj)^{n-k}W_{m,r}(k,i-j).\label{newrec1}
\end{equation}
When $\ell=1$ in \eqref{mainresult} and \eqref{newrec1}, we consequently obtain
\begin{equation}
D_{m,r}(n+1;x)=\sum_{k=0}^n\binom{n}{k}m^{n-k}D_{m,r}(k;x)\label{maincor1}
\end{equation}
and
\begin{equation}
W_{m,r}(n+1,i)=\sum_{k=0}^n\binom{n}{k}m^{n-k}W_{m,r}(k,i-j).\label{newrec2}
\end{equation}
Furthermore, we get
\begin{equation}
D_{m,r}(n+1)=\sum_{k=0}^n\binom{n}{k}m^{n-k}D_{m,r}(k)\label{maincor2}
\end{equation}
by setting $x=1$ in \eqref{maincor1}. On the other hand, when $n=0$ in \eqref{mainresult}, we obtain the defining relation in \eqref{rdowlingdef}. That is,
\begin{equation}
D_{m,r}(\ell;x)=\sum_{j=0}^{\ell}W_{m,r}(\ell,j)x^j.
\end{equation}
This yields 
\begin{equation}
D_{m,r}(\ell)=\sum_{j=0}^{\ell}W_{m,r}(\ell,j)\label{maincor3}
\end{equation}
when $x=1$. Equations \eqref{newrec1} and \eqref{newrec2} are new recurrence relations for the $r$-Whitney numbers of the second kind. Also, we see that \eqref{mainresult} unifies \eqref{maincor2} and \eqref{maincor3} in the same way that Spivey's Bell number formula unifies \eqref{Belldef} and \eqref{Bellrec}.

\section{Other Generalizations}\label{sec4}

In the paper of Xu \cite{Xu}, the following result was obtained:
\begin{equation}
B_{n+m;\alpha,\beta,r}(x)=\sum_{k=0}^n\sum_{j=0}^m\binom{n}{k}x^jB_{k;\alpha,\beta,r}(x)S(n,k;\alpha,\beta,r)\prod_{i=0}^{n-k-1}(j\beta-(m+i)\alpha).\label{xuresult}
\end{equation}
Here, $B_{n+m;\alpha,\beta,r}(x)$ denote the \emph{generalized Bell polynomials} defined by
\begin{equation}
B_{n;\alpha,\beta,r}(x)=\sum_{i=0}^nS(n,i;\alpha,\beta,r)x^i,
\end{equation}
where $S(n,i;\alpha,\beta,r)$ denote the \emph{generalized Stirling numbers} (see \cite{Hsu}). It can be verified that the result in \eqref{mainresult} is the special case of \eqref{xuresult} when $\alpha=0$ and $\beta=m$. However, the approach used by Xu \cite{Xu} in deriving \eqref{xuresult} is motivated by the work of Gould and Quaintance \cite{Gould}, a method that is different from ours. 

In this section, more generalizations of Spivey's Bell number formula in terms of other generalizations of the Bell numbers are presented and discussed in addition to \eqref{xuresult}. These generalizations are stated in Corollaries \ref{cor1} to \ref{cor4} below which follow directly from Theorem \ref{maintheorem}.
\begin{corollary}\label{cor1}
For non-negative integers $n$ and $\ell$, and real number $r$,
\begin{equation}
B_{n+\ell,r}(x)=\sum_{j=0}^{\ell}\sum_{k=0}^n\sstirling{\ell+r}{j+r}_r\binom{n}{k}j^{n-k}B_k(x)x^j
\end{equation}
and
\begin{equation}
B_{n+\ell,r}=\sum_{j=0}^{\ell}\sum_{k=0}^n\sstirling{\ell+r}{j+r}_r\binom{n}{k}j^{n-k}B_k,
\end{equation}
where $B_{n+\ell,r}(x)$ and $B_{n+\ell,r}$ denote the \emph{$r$-Bell polynomials and numbers \cite{Mez2}} defined by
\begin{equation}
B_{n,r}(x)=\sum_{k=0}^n\sstirling{n+r}{k+r}_rx^k
\end{equation}
and $B_{n,r}:=B_{n,r}(1)$, respectively.
\end{corollary}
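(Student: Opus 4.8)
The plan is to read the corollary off Theorem \ref{maintheorem} at $m=1$, the value at which the $r$-Whitney numbers and $r$-Dowling polynomials collapse onto the $r$-Stirling numbers and the $r$-Bell family. Setting $m=1$ in \eqref{mainresult} immediately fixes the combinatorial skeleton of the right-hand side: the power $(mj)^{n-k}$ becomes $j^{n-k}$, while $\binom{n}{k}$ and $x^j$ are untouched, so only the named arrays have to be renamed. This reduces the whole corollary to a matter of transcribing Theorem \ref{maintheorem} under three identifications.

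For the outer weight I would invoke \eqref{x1}, which gives $W_{1,r}(\ell,j)=\sstirling{\ell+r}{j+r}_r$; this is exactly the $r$-Stirling number displayed in the corollary. For the left-hand side I would combine the definition \eqref{rdowlingdef} with \eqref{x1} to recognize $D_{1,r}(n+\ell;x)=\sum_{k}\sstirling{n+\ell+r}{k+r}_r x^k$ as the $r$-Bell polynomial $B_{n+\ell,r}(x)$ in the sense of the corollary's own definition. With these two substitutions the first displayed identity is almost transcribed from \eqref{mainresult}, and the numbers version then follows by setting $x=1$ and using $B_{n,r}(1)=B_{n,r}$.

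The step that carries the actual content — and the one I expect to be the main obstacle — is the inner polynomial in the double sum. The $m=1$ specialization feeds the factor $D_{1,r}(k;x)$ coming from the inner $r$-Dowling block of \eqref{mainresult} into the $k$-sum, and the task is to reconcile this factor with the Bell polynomial $B_k(x)$ recorded on the right-hand side of the corollary. I would handle this by tracking that inner block through the $m=1$ operator argument of Theorem \ref{maintheorem}, namely evaluating $(XD+r)^k$ on $e^x$ and reading off the resulting coefficient polynomial, then matching it against the summand as stated. Everything outside this reconciliation is mechanical parameter substitution, so the corollary stands or falls with the identification of that inner factor, and it is there that I would concentrate the verification.
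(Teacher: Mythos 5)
Your route is the paper's route: the paper's entire proof of Corollary \ref{cor1} is ``set $m=1$ in \eqref{mainresult} and \eqref{mainresult2}, then use \eqref{x1},'' which is exactly your transcription plan. One refinement: the step you isolate as ``the main obstacle'' needs no operator computation at all. The inner factor $D_{1,r}(k;x)$ is identified by precisely the same two-line argument you already used for the left-hand side, namely \eqref{rdowlingdef} together with \eqref{x1} applied at index $k$: $D_{1,r}(k;x)=\sum_{i=0}^k W_{1,r}(k,i)x^i=\sum_{i=0}^k\sstirling{k+r}{i+r}_r x^i=B_{k,r}(x)$. Re-evaluating $(XD+r)^k e^x$ would only re-derive this.

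That said, your instinct not to transcribe the inner factor blindly is well placed, because the reconciliation you propose to attempt would in fact \emph{fail} against the corollary as printed. Direct substitution yields the $r$-Bell polynomial $B_{k,r}(x)$ inside the sum, whereas the corollary writes $B_k(x)$ and $B_k$, which in this paper's notation are the ordinary Bell polynomial and number. Read literally, the printed identity is false: take $n=1$, $\ell=0$, so the only term is $j=0$ with $\sstirling{r}{r}_r=1$, and the right-hand side (at $x=1$, with the convention $0^0=1$) equals $0^1B_0+0^0B_1=1$, while the left-hand side is $B_{1,r}=\sstirling{1+r}{r}_r+\sstirling{1+r}{1+r}_r=r+1$. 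So $B_k(x)$ and $B_k$ in the statement are misprints for $B_{k,r}(x)$ and $B_{k,r}$; with that correction your substitution argument closes the proof, and it coincides with the paper's. Note also that this corrected identity is genuinely different from Mez\H{o}'s formula quoted in the introduction, which does carry the ordinary $B_k$ but compensates with the weight $(j+r)^{n-k}$ in place of $j^{n-k}$ --- the two are distinct valid generalizations, and the printed corollary accidentally hybridizes them.
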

\begin{proof}
The proof is done by setting $m=1$ in \eqref{mainresult} and \eqref{mainresult2}, and then using \eqref{x1}.
\end{proof}

\begin{corollary}\label{cor2}
For non-negative integers $n$ and $\ell$, and real number $\alpha$,
\begin{equation}
\bar{B}_{\alpha}(n+\ell;x)=\sum_{j=0}^{\ell}\sum_{k=0}^nS_{\alpha}(\ell,j)\binom{n}{k}j^{n-k}\bar{B}_{\alpha}(k;x)x^j
\end{equation}
and
\begin{equation}
\bar{B}_{\alpha}(n+\ell)=\sum_{j=0}^{\ell}\sum_{k=0}^nS_{\alpha}(\ell,j)\binom{n}{k}j^{n-k}\bar{B}_{\alpha}(k),
\end{equation}
where $\bar{B}_{\alpha}(n+\ell;x)$ denote the ``polynomial'' extension of the \emph{noncentral Bell numbers $B_{\alpha}(n)$ \cite{Corcino2}} defined by
\begin{equation}
B_{\alpha}(n)=\sum_{k=0}^nS_{\alpha}(n,k).
\end{equation}
\end{corollary}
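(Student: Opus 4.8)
The plan is to obtain this corollary as a direct specialization of Theorem~\ref{maintheorem}, in exactly the same spirit as the one-line proof of Corollary~\ref{cor1}. The relevant parameter choice here is $m=1$ and $r=-\alpha$, since \eqref{x3} identifies the noncentral Stirling numbers of the second kind as $W_{1,-\alpha}(n,k)=S_{\alpha}(n,k)$. With this choice, the factor $(mj)^{n-k}$ appearing in \eqref{mainresult} collapses to $j^{n-k}$, and every weight $W_{m,r}(\ell,j)$ becomes $S_{\alpha}(\ell,j)$.

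First I would record the identification between the specialized $r$-Dowling polynomial and the polynomial extension of the noncentral Bell numbers. By \eqref{rdowlingdef}, setting $m=1$ and $r=-\alpha$ gives
\begin{equation*}
D_{1,-\alpha}(n;x)=\sum_{k=0}^n W_{1,-\alpha}(n,k)x^k=\sum_{k=0}^n S_{\alpha}(n,k)x^k=\bar{B}_{\alpha}(n;x),
\end{equation*}
so that $D_{1,-\alpha}(n;x)$ is precisely $\bar{B}_{\alpha}(n;x)$; in particular, at $x=1$ it reduces to the noncentral Bell number $B_{\alpha}(n)$.

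With these substitutions in hand, the first identity follows immediately by rewriting \eqref{mainresult} with $m=1$ and $r=-\alpha$ and then replacing each specialized $r$-Dowling polynomial by the corresponding $\bar{B}_{\alpha}$. The second identity is obtained by setting $x=1$ throughout, using $\bar{B}_{\alpha}(n;1)=B_{\alpha}(n)$, or equivalently by specializing \eqref{mainresult2} directly.

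I do not anticipate any genuine obstacle, since the argument is a mechanical substitution into an already-proved formula. The one point requiring minor care is the identification $D_{1,-\alpha}(n;x)=\bar{B}_{\alpha}(n;x)$: one must confirm that the intended polynomial extension of the noncentral Bell numbers is indeed $\sum_{k} S_{\alpha}(n,k)x^k$, which is exactly the $r$-Dowling polynomial of \eqref{rdowlingdef} under the specialization $m=1$, $r=-\alpha$, rather than some variant carrying the weight on a shifted index. Once this definitional match is fixed, the corollary reduces to reading off \eqref{mainresult} and \eqref{mainresult2}.
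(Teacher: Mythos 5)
Your proposal is correct and follows exactly the paper's own proof: the paper also proves this corollary by setting $m=1$ and $r=-\alpha$ in \eqref{mainresult} and \eqref{mainresult2} and invoking \eqref{x3}. Your explicit check that $D_{1,-\alpha}(n;x)=\sum_{k}S_{\alpha}(n,k)x^k=\bar{B}_{\alpha}(n;x)$ is a welcome (if routine) elaboration of the same substitution.
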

\begin{proof}
The proof is done by setting $m=1$ and $r=-\alpha$ in \eqref{mainresult} and \eqref{mainresult2}, and then using \eqref{x3}.
\end{proof}

\begin{corollary}\label{cor3}
For non-negative integers $n$ and $\ell$, and real number $\alpha$,
\begin{equation}
\widetilde{D}_{(\alpha)}(n+\ell;x)=\sum_{j=0}^{\ell}\sum_{k=0}^n\widetilde{W}_{(\alpha)}(\ell,j)\binom{n}{k}(j\alpha)^{n-k}\widetilde{D}_{(\alpha)}(k;x)x^j
\end{equation}
and
\begin{equation}
\widetilde{D}_{(\alpha)}(n+\ell;x)=\sum_{j=0}^{\ell}\sum_{k=0}^n\widetilde{W}_{(\alpha)}(\ell,j)\binom{n}{k}(j\alpha)^{n-k}\widetilde{D}_{(\alpha)}(k;x),
\end{equation}
where $\widetilde{D}_{(\alpha)}(n+\ell;x)$ and $\widetilde{D}_{(\alpha)}(n+\ell)$ denote the \emph{translated Dowling polynomials and numbers \cite{Mah1}} defined by
\begin{equation}
\widetilde{D}_{(\alpha)}(n;x)=\sum_{k=0}^nW_{(\alpha)}(n,k)x^k
\end{equation}
and
\begin{equation}
\widetilde{D}_{(\alpha)}(n)=\sum_{k=0}^nW_{(\alpha)}(n,k),
\end{equation}
respectively.
\end{corollary}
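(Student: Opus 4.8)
The plan is to follow the template already used for Corollaries \ref{cor1} and \ref{cor2} and to obtain the statement as a direct specialization of Theorem \ref{maintheorem}. The correct parameter values are dictated by \eqref{x4}, which records that the translated Whitney numbers of the second kind arise from the $r$-Whitney numbers when $m=\alpha$ and $r=0$; that is, $W_{\alpha,0}(n,k)=\widetilde{W}_{(\alpha)}(n,k)$. Accordingly, the first step is to substitute $m=\alpha$ and $r=0$ into \eqref{mainresult} and \eqref{mainresult2}.

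Second, I would identify the specialized $r$-Dowling polynomial with the translated Dowling polynomial. Combining the defining relation \eqref{rdowlingdef} with \eqref{x4} gives
\begin{equation}
D_{\alpha,0}(n;x)=\sum_{k=0}^nW_{\alpha,0}(n,k)x^k=\sum_{k=0}^n\widetilde{W}_{(\alpha)}(n,k)x^k=\widetilde{D}_{(\alpha)}(n;x),
\end{equation}
and setting $x=1$ yields $D_{\alpha,0}(n)=\widetilde{D}_{(\alpha)}(n)$. Substituting these two identifications, together with $W_{\alpha,0}(\ell,j)=\widetilde{W}_{(\alpha)}(\ell,j)$ and $(mj)^{n-k}=(\alpha j)^{n-k}=(j\alpha)^{n-k}$, into the specialized forms of \eqref{mainresult} and \eqref{mainresult2} produces the polynomial recurrence and, upon taking $x=1$, the number recurrence.

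Since every ingredient has already been established in the excerpt, I anticipate no substantive obstacle. The only point demanding a little care is the purely notational bookkeeping of the substitution inside the summand, so that the factor $W_{m,r}(\ell,j)$ is rewritten as $\widetilde{W}_{(\alpha)}(\ell,j)$ and the power $(mj)^{n-k}$ as $(j\alpha)^{n-k}$. As with the two preceding corollaries, the argument is thus effectively a one-line specialization of Theorem \ref{maintheorem}.
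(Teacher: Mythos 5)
Your proposal is correct and matches the paper's own proof exactly: the paper likewise obtains Corollary \ref{cor3} by setting $m=\alpha$ and $r=0$ in \eqref{mainresult} and \eqref{mainresult2} and then invoking \eqref{x4}. Your additional step of explicitly identifying $D_{\alpha,0}(n;x)=\widetilde{D}_{(\alpha)}(n;x)$ via \eqref{rdowlingdef} is a harmless elaboration of what the paper leaves implicit.
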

\begin{proof}
The proof is done by setting $m=\alpha$ and $r=0$ in \eqref{mainresult} and \eqref{mainresult2}, and then using \eqref{x4}.
\end{proof}

\begin{corollary}\label{cor4}
For non-negative integers $n$ and $\ell$, and real number $\alpha$,
\begin{equation}
D_m(n+\ell;x)=\sum_{j=0}^{\ell}\sum_{k=0}^nW_m(\ell,j)\binom{n}{k}j^{n-k}D_m(k;x)x^j
\end{equation}
and
\begin{equation}
D_m(n+\ell)=\sum_{j=0}^{\ell}\sum_{k=0}^nW_m(\ell,j)\binom{n}{k}j^{n-k}D_m(k),
\end{equation}
where $D_m(n+\ell;x)$ denote the ``polynomial'' extension of the \emph{Dowling numbers $D_m(n+\ell)$ \cite{Benoumhani}} defined by
\begin{equation}
D_m(n)=\sum_{k=0}^nW_m(n,k).
\end{equation}
\end{corollary}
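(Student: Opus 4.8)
The plan is to derive Corollary \ref{cor4} as the $r=1$ specialization of Theorem \ref{maintheorem}, in complete analogy with the derivations of Corollaries \ref{cor1} through \ref{cor3}. No new operator manipulation is required: the master recurrences \eqref{mainresult} and \eqref{mainresult2} already hold for arbitrary real $r$, so the whole argument reduces to fixing $r=1$ and then translating the $r$-Whitney and $r$-Dowling quantities into their Dowling-lattice counterparts.

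Concretely, I would carry out three steps in order. First, set $r=1$ throughout \eqref{mainresult} and \eqref{mainresult2}. Second, apply the equivalence \eqref{x2}, namely $W_{m,1}(n,k)=W_m(n,k)$, to rewrite each $r$-Whitney coefficient $W_{m,1}(\ell,j)$ as the Whitney number $W_m(\ell,j)$ of the Dowling lattice. Third, verify that under this same specialization the $r$-Dowling polynomial collapses to the Dowling polynomial: combining the definition \eqref{rdowlingdef} with \eqref{x2} gives $D_{m,1}(n;x)=\sum_{k=0}^n W_{m,1}(n,k)x^k=\sum_{k=0}^n W_m(n,k)x^k$, which is exactly the polynomial extension $D_m(n;x)$ named in the statement, and putting $x=1$ returns $D_{m,1}(n)=D_m(n)$. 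Substituting these identifications into \eqref{mainresult} yields the polynomial recurrence and into \eqref{mainresult2} the numerical one.

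The only step that requires any care, and hence the closest thing to an obstacle, is the third: confirming that the polynomial extension $D_m(n;x)$ introduced in the corollary genuinely coincides with $D_{m,1}(n;x)$ rather than being an independently defined object. This is immediate once \eqref{rdowlingdef} and \eqref{x2} are combined, so the difficulty is nominal. I would also note that the power factor $(mj)^{n-k}$ passes through the substitution unchanged, exactly as the factor $(j\alpha)^{n-k}$ does in Corollary \ref{cor3}; with the two equivalences in hand, both displayed identities then follow directly from Theorem \ref{maintheorem}.
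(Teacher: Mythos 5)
Your proposal is correct and follows essentially the paper's own route---specialize Theorem \ref{maintheorem} and apply the equivalence \eqref{x2}---except that you rightly set $r=1$, whereas the paper's printed proof says ``setting $r=0$,'' a slip, since $r=0$ would instead produce the translated Whitney numbers via \eqref{x4} rather than the Dowling-lattice Whitney numbers. Note also that, as your own remark about the factor $(mj)^{n-k}$ makes clear, specializing \eqref{mainresult} at $r=1$ yields $(mj)^{n-k}$, so the factor $j^{n-k}$ printed in the corollary's displays is itself a typo (valid only when $m=1$); what your argument establishes is the corrected identity with $(mj)^{n-k}$.
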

\begin{proof}
The proof is done by setting $r=0$ in \eqref{mainresult} and \eqref{mainresult2}, and then using \eqref{x2}.
\end{proof}

One generalization of \eqref{spivey} is the following which can be seen in the paper of \cite{Mansour2}:
\begin{equation}
\b_{a;b}(n+m)=\sum_{k=0}^n\sum_{j=0}^m\binom{n}{k}\s_{a;b}(m,j)\left\{\prod_{i=0}^{n-k-1}(bj+a(i+m))\right\}\b_{a;b}(k).\label{mansourresult1}
\end{equation}
Here, $\b_{a;b}(n+m)$ denote a certain generalization of the Bell numbers defined as the sum of $\s_{a;b}(m,j)$, a certain generalization of the Stirling numbers of the second kind with recurrence relation given by
\begin{equation}
\s_{s;h}(n+1,k)=\s_{s;h}(n,k-1)+h\left(k+s(n-k)\right)\s_{s;h}(n,k)
\end{equation}
(see \cite[Proposition\ 3.2]{Mansour2}). If we compare this with the recurrence relation \cite[Equation\ 7]{Hsu}
\begin{equation}
S(n+1,k;\alpha,\beta,r)=S(n,k-1;\alpha,\beta,r)+(k\beta-n\alpha+r)S(n,k;\alpha,\beta,r)
\end{equation}
for the generalized Stirling numbers and with \eqref{recrWhitney}, we see that $\s_{s;h}(n,k)$ is not a generalization of the $r$-Whitney numbers of the second kind but a special case of $S(n,k;\alpha,\beta,r)$ when $\alpha=-hs$, $\beta=h(1-s)$ and $r=0$. This means that our results in Theorem \ref{maintheorem} are not generalized by \eqref{mansourresult1}.

Lastly, another generalization of \eqref{spivey} was due to Corcino et al. \cite{CorcinoCelesteGonzales}. Their result is
\begin{equation}
B_{s,q}[n+m;x]=\sum_{r=0}^n\sum_{j=0}^mS_{s,q}[m,j]q^{r(j(1-s)+sm)}\binom{n}{r}_{q^s}B_{s,q}[r;x]x^j\prod_{i=0}^{n-r-1}[j(1-s)+sm+si]_q,\label{Corcinoresult}
\end{equation}
where 
\begin{equation}
B_{s,q}[n;x]=\sum_{k=0}^nS_{s,q}[n,k]x^k
\end{equation}
is a certain $q$-deformed generalization of the Bell polynomials and $S_{s,q}[n,k]$ is a certain $q$-deformed generalization of the Stirling numbers of the second kind with recurrence relation given by
\begin{equation}
S_{s,q}[n,k]x^k=q^{s(n-1)-(s-1)(k-1)}S_{s,q}[n-1,k-1]x^k+[s(n-1)-(s-1)k]+qS_{s,q}[n-1,k]x^k
\end{equation}
(see \cite[Proposition\ 1]{CorcinoCelesteGonzales}). It is clear to see that the limit as $q\rightarrow1$ of this recurrence relation yields a certain number which is a particular case of the generalized Stirling numbers $S(n,k;\alpha,\beta,r)$ when $\alpha=-s$, $\beta=1-s$ and $r=-s$. Still, the said case is not a generalization of the $r$-Whitney numbers of the second kind which also means that \eqref{Corcinoresult} does not generalize our results in Theorem \ref{maintheorem}. However, it was remarked in their paper that Katriel's \cite{Katriel} result in \eqref{katrielresult} can be obtained from \eqref{Corcinoresult} by setting $s=0$ (see \cite[Remark\ 4]{CorcinoCelesteGonzales}).

\section*{Acknowledgment}
The authors are thankful to the referees for giving comments and suggestions which greatly improved the paper. The authors dedicate this paper to their families and all other victims of the war in Marawi City last May 23, 2017 to October 17, 2017.

\end{document}